\theoremstyle{plain}
\newtheorem{theorem}{\indent\sc Theorem}[section]
\newtheorem{lemma}[theorem]{\indent\sc Lemma}
\theoremstyle{definition}
\newtheorem{remark}[theorem]{\indent\sc Remark}
\begin{document}

\title[On harmonic functions and the hyperbolic metric]{On harmonic functions and the hyperbolic metric}

\author[Marijan Markovi\'{c}]{Marijan Markovi\'{c}}
\address{
{Faculty of Natural Sciences and Mathematics \endgraf
University of Montenegro \endgraf
Cetinjski put b.b. 81000 Podgorica\endgraf
Montenegro}
}
\email{marijanmmarkovic@gmail.com}

\subjclass[2010]{Primary 31A05}

\keywords{Positive harmonic functions}

\begin{abstract}
Motivated by some recent results of Kalaj and Vuorinen (Proc. Amer. Math.
Soc., 2012),    we prove that positive harmonic functions defined in the
upper half--plane are contractions w.r.t.          hyperbolic metrics of
half--plane and positive part of the real line, respectively
\end{abstract}

\maketitle

\section{Introduction and the main result}

Denote by     $\mathbb{U}=\{z\in\mathbb{C}:|z|<1\}$ the unit disc of the
complex plane $\mathbb{C}$ and by       $\mathbb{H}  = \{z\in\mathbb{C}:
{\rm Im}\, z>0\}$ the upper half--plane. $\mathbb{R}$  is the whole real
axis,  and   the positive   real axis is denoted      by $\mathbb{R}^+ =
\{x\in\mathbb{R}:x>0\}$.

%% THE HYPERBOLIC METRIC
Let    $d_h$ stands for the hyperbolic distance on the disc $\mathbb{U}$.
With the same letter  we denote the hyperbolic distance  on $\mathbb{H}$
and    $\mathbb{R}^+$,   since we believe that misunderstanding will not
occur. We  have
\begin{equation*}
d_h(z,w)=\inf_\gamma \int _\gamma \frac {|d\omega|}{1-|\omega|^2}
= 2 \tanh^{-1}\left|\frac{z - w}{1 - \overline{z}w}\right|.
\end{equation*}
where  $\gamma\subseteq\mathbb{U}$  is      any regular curve connecting
$z\in\mathbb{U}$ and $w\in\mathbb{U}$. On the other hand, the hyperbolic
distance between $z\in\mathbb{H}$ and $w\in\mathbb{H}$ is
\begin{equation*}
d_h(z,w)=\inf_\gamma\int_\gamma \frac {|d\omega|}{{\rm Im}\ \omega}
=2 \tanh ^{-1}\left|\frac{z-w}{\overline{z}-w}\right|,
\end{equation*}
where now  $\gamma\subseteq\mathbb{H}$. In particular,   the  hyperbolic
distance  between $x\in\mathbb{R}^+$ and $y\in\mathbb{R}^+$, where $x\le
 y$  is
\begin{equation*}
d_h(x,y)=d_h(ix,iy)= \int_x^y\frac {dt}t=\log \frac yx.
\end{equation*}

%%  CLASSICAL RESULTS
Recall the  classical Schwarz--Pick lemma. An analytic function  $f$ of
the  unit disk into itself satisfies
\begin{equation*}
\left|\frac{f(z)-f(w)}{1-\overline{f(z)}f(w)}\right|\le\left|\frac{z-w}{1-\overline{z}w}\right|
\end{equation*}
for all  $z,\, w\in\mathbb{U}$. The equality sign occurs if and only if
$f$   is a M\"{o}bius transform of $\mathbb{U}$ onto itself.

The previous      result has a counterpart  for  analytic functions $f:
\mathbb{H} \rightarrow\mathbb{H}$. Using       the Cayley transform one
easily finds that  the Schwarz--Pick inequality   in this settings says
\begin{equation}\label{SCHWARZ.PICK.HALFPLANE}
\left|\frac{f(z)-f(w)}{\overline{f(z)}-f(w)}\right|\le \left|\frac{z-w}{\overline{z}-w}\right|
\end{equation}
for every $z,\, w\in\mathbb{H}$.            Letting $z\rightarrow w$ in
\eqref{SCHWARZ.PICK.HALFPLANE},  we obtain
\begin{equation}\label{SCHWARZ.PICK.HALFPLANE.DERIVATE}
\frac{|f'(z)|}{{\rm Im}\ f(z)}\le \frac{1}{{\rm Im}\ z}.
\end{equation}
Regarding   the   expression   for the hyperbolic distance in the upper
half--plane,    \eqref{SCHWARZ.PICK.HALFPLANE}      may be rewritten as
\begin{equation}\label{SCHWARZ.PICK.HALFPLANE.DH}
d_h(f(z),f(w))\le d_h(z,w),
\end{equation}
which    means    that $f$ is a contraction in the hyperbolic metric of
$\mathbb{H}$.     It is well known that the equality  sign attains   in
\eqref{SCHWARZ.PICK.HALFPLANE}, \eqref{SCHWARZ.PICK.HALFPLANE.DERIVATE},
and \eqref{SCHWARZ.PICK.HALFPLANE.DH} (for    some     $z$ or for  some
distinct $z$ and $w$, and therefore for all such points) if and only if
\begin{equation*}
f=\text{a M\"{o}bius transform of $\mathbb{H}$ onto itself}.
\end{equation*}

%% RESULTS OF KALAJ AND VUORINEN:
During the past decade,    harmonic  mappings and  functions  have been
extensively studied and many results     from the theory   of  analytic
functions have been extended for them.

Quite    recently Kalaj and Vuorinen \cite{KALAJ.VUORINEN.PAMS}  proved
that  a harmonic        function $f:\mathbb{U}\rightarrow (-1,1)$ is  a
Lipschitz    function in the hyperbolic metric, i.e., for every $z,\, w
\in\mathbb{U}$  they  obtained that
\begin{equation}\label{LIP.DH}
d_h(u(z),u(w))\le \frac 4\pi d_h(z,w).
\end{equation}
Actually,   using        the classical results they firstly established
\begin{equation}\label{NABLA.HARMONIC}
|\nabla u(z)|\le\frac 4\pi \frac{1-|u(z)|^2}{1-|z|^2}
\end{equation}
for $z\in\mathbb{U}$. Both inequalities   are  sharp.

We refer to ~\cite{CHEN} for a related result.

%%  THE MAIN RESULT
We  are  interested  here  in  the  positive harmonic functions defined
in  $\mathbb{H}$. As we have said in the abstract,  our main  aim is to
prove

\begin{theorem}\label{TH}
Let $u:\mathbb{H}\rightarrow\mathbb{R}^+$ be harmonic. Then
\begin{equation}\label{CONT.DH}
d_h(u(z),u(w))\le d_h(z,w)
\end{equation}
for all $z,\, w\in\mathbb{H}$.    In   other words, a positive harmonic
function  is  a contractible    function    in  the   hyperbolic metric.

Moreover, if    the equality sign holds in \eqref{CONT.DH}     for some
pair of distinct points $z$ and $w$,  then  the function $u$ must be of
the following form
\begin{equation*}
u(z)
={\rm Im}(\text{a M\"{o}bius transform of}\ \mathbb{H}\ \text{onto}\ \mathbb{H}).
\end{equation*}
\end{theorem}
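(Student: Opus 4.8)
The key is to reduce the statement to a pointwise (infinitesimal) inequality, namely a sharp bound on $|\nabla u(z)|$ in terms of $u(z)$ and $\operatorname{Im} z$, and then integrate along geodesics. Concretely, I would first prove that for a positive harmonic $u:\mathbb H\to\mathbb R^+$ one has
\begin{equation*}
|\nabla u(z)|\le \frac{u(z)}{\operatorname{Im} z},\qquad z\in\mathbb H.
\end{equation*}
The natural tool is the Herglotz/Poisson representation: a positive harmonic function on $\mathbb H$ can be written as
\begin{equation*}
u(z)=c\,\operatorname{Im} z+\frac{1}{\pi}\int_{\mathbb R}\frac{\operatorname{Im} z}{|z-t|^2}\,d\mu(t)
\end{equation*}
for some $c\ge0$ and some positive measure $\mu$ with $\int d\mu(t)/(1+t^2)<\infty$. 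Differentiating the Poisson kernel under the integral sign and estimating $|\nabla_z P(z,t)|$ against $P(z,t)/\operatorname{Im} z$ should yield the displayed gradient bound; the term $c\,\operatorname{Im} z$ contributes gradient of modulus $c$, which is exactly $(c\operatorname{Im} z)/\operatorname{Im} z$, so it is consistent with the bound. Alternatively — and perhaps more cleanly — compose with the Cayley transform to move to the disc: $v=u\circ\varphi$ is a positive harmonic function on $\mathbb U$, and the classical Schwarz-type inequality for positive harmonic functions on the disc gives $|\nabla v(0)|\le 2\,v(0)$ (equality for $v(\zeta)=\operatorname{Re}\frac{1+\zeta}{1-\zeta}$-type functions), which transplants to the half-plane as exactly the inequality above.

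Once the pointwise bound is in hand, the contraction property follows by integrating the hyperbolic line element. Given $z,w\in\mathbb H$, let $\gamma$ be the hyperbolic geodesic in $\mathbb H$ joining them. Then $u\circ\gamma$ is a path in $\mathbb R^+$, and
\begin{equation*}
d_h(u(z),u(w))\le\int_{u\circ\gamma}\frac{|ds|}{s}=\int_\gamma\frac{|\nabla u(\omega)\cdot d\omega|}{u(\omega)}\le\int_\gamma\frac{|\nabla u(\omega)|\,|d\omega|}{u(\omega)}\le\int_\gamma\frac{|d\omega|}{\operatorname{Im}\omega}=d_h(z,w),
\end{equation*}
using the explicit expression $d_h(x,y)=|\log(y/x)|$ on $\mathbb R^+$ together with the fact that the hyperbolic metric on $\mathbb R^+$ has density $1/s$, and the half-plane metric has density $1/\operatorname{Im}\omega$. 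This proves \eqref{CONT.DH}.

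**The equality case.** This is where the real work lies. Suppose $d_h(u(z_0),u(w_0))=d_h(z_0,w_0)$ for some distinct $z_0,w_0$. Tracing back through the chain of inequalities, equality must hold at every step, for $\omega$ ranging over the whole geodesic segment from $z_0$ to $w_0$: in particular $|\nabla u(\omega)|=u(\omega)/\operatorname{Im}\omega$ along that segment, and moreover $\nabla u(\omega)$ must be (anti)parallel to the tangent direction of $\gamma$ and never vanish. The plan is to show that the full-strength equality $|\nabla u|\equiv u/\operatorname{Im}\cdot$ already on an open set (or along an arc, then by analyticity-type propagation) forces $u=\operatorname{Im}\psi$ for a Möbius automorphism $\psi$ of $\mathbb H$. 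One efficient route: pass to the disc via Cayley, where equality in $|\nabla v|\le 2v$ at one point forces $v(\zeta)=\operatorname{Re}\bigl(e^{i\theta}\tfrac{1+\zeta}{1-\zeta}\bigr)$ after a rotation — equivalently $v$ is the real part (hence $u$ the imaginary part) of a Möbius self-map — because the extremal functions in the positive-harmonic Schwarz lemma on the disc are precisely the Poisson kernels, i.e. $v$ has a Herglotz measure that is a single point mass. Then transplanting back, $u=\operatorname{Im}\psi$ with $\psi$ a Möbius transform of $\mathbb H$ onto $\mathbb H$ (equivalently, $u(z)=c\,|z-a|^{-2}\operatorname{Im}z$-type or $c\operatorname{Im}z$, all of which are imaginary parts of half-plane automorphisms up to normalization).

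**The main obstacle.** The contraction inequality itself is routine once the gradient estimate is secured, and the gradient estimate is a standard consequence of the Poisson representation. The delicate part is the rigidity statement: one must argue that equality holding merely along a single geodesic segment (a one–dimensional set) is already enough to pin down $u$ globally. The cleanest way to handle this is to observe that equality forces the Herglotz measure of $v=u\circ\varphi$ to be a point mass — this can be extracted by noting that in the estimate $|\nabla_z P(z,t)|\le P(z,t)/\operatorname{Im}z$ for the Poisson kernel, the directional derivative achieves this bound (with the correct sign and a common direction for all $t$ in the support) only when the support is a single point. Making the "single point mass" deduction rigorous from equality on a curve rather than on an open set is the step requiring care, and it is where I would spend most of the effort.
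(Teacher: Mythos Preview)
Your plan for the contraction inequality is correct and matches the paper's: establish the pointwise bound $|\nabla u(z)|\le u(z)/\operatorname{Im} z$ and then integrate along a curve. The paper packages the integration step as a general lemma about metric densities, but the content is exactly your chain
\[
d_h(u(z),u(w))\le\int_\gamma\frac{|\nabla u(\omega)|}{u(\omega)}\,|d\omega|\le\int_\gamma\frac{|d\omega|}{\operatorname{Im}\omega}.
\]

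Where you diverge from the paper is in how you obtain the gradient estimate and, more importantly, in how you handle the equality case. The paper's device is much shorter than either your Poisson--representation route or your Cayley--transform route: since $\mathbb H$ is simply connected, $u$ has a harmonic conjugate, so there is an analytic $f:\mathbb H\to\mathbb C$ with $\operatorname{Im} f=u$; positivity of $u$ means $f(\mathbb H)\subseteq\mathbb H$. Because $|f'|=|\nabla u|$, the Schwarz--Pick inequality for self-maps of $\mathbb H$ gives at once
\[
\frac{|\nabla u(z)|}{u(z)}=\frac{|f'(z)|}{\operatorname{Im} f(z)}\le\frac{1}{\operatorname{Im} z},
\]
with no kernel computations needed.

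This same device dissolves what you call ``the main obstacle''. Equality in Schwarz--Pick at a \emph{single} point already forces $f$ to be a M\"obius automorphism of $\mathbb H$; hence $u=\operatorname{Im}(\text{M\"obius})$. So once the chain of inequalities is saturated you only need equality in the gradient estimate at one point on the geodesic segment, and you are done. There is no need to argue about Herglotz measures being point masses, nor to propagate equality from a curve to an open set. Your sketch does contain this idea (``equality in $|\nabla v|\le 2v$ at one point forces\dots''), but you then back away from it and flag the rigidity as the hard part; in fact it is a one-line consequence of Schwarz--Pick once you pass to the analytic completion $f$.
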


\begin{remark}
The     group of all conformal mappings of $\mathbb{H}$ onto itself is
given by
\begin{equation*}
\left\{\frac{az+b}{cz+d} : a,\, b,\, c,\, d\in\mathbb{R},\, ad-bc>0\right\}.
\end{equation*}
Thus, a          positive harmonic function $u(z)$ is extremal for the
inequality \eqref{CONT.DH} if and only if it has the form
\begin{equation*}
u(z) = k    \cdot  \mathrm{Im}z\quad\text{or}\quad u(z)  =  k \cdot P(z,t),
\end{equation*}
where $k>0$ and $t\in\mathbb{R}$; here
\begin{equation*}
P(z,t) = \frac 1\pi \frac {y}{(x-t)^2+y^2},
\end{equation*}
$z=x+iy\in\mathbb{H},\, t\in\mathbb{R}$ is the Poisson kernel  for the
upper  half--plane.
\end{remark}

\section{Proof of the result}

%%  ON A METRIC DENSITY
Let $\Omega$ be any domain in $\mathbb{C}$ (or in $\mathbb{R}$). A metric
density  $\rho$  is any continuous function  in $\Omega$ with nonnegative
values everywhere in $\Omega$. The $\rho$--length (or just a length) of a
curve $\gamma$  in $\Omega$ is  given by
\begin{equation}
\int_\gamma\, \rho(z)\, |dz|.
\end{equation}
The  $\rho$--distance (or just a distance) between  $z\in\Omega$  and $w
\in \Omega$ is
\begin{equation}
d_\rho(z,w)=\inf_\gamma\int_\gamma\rho(z)|dz|,
\end{equation}
where  $\gamma$  is a regular curve in $\Omega$  connecting point $z$ and
$w$. Of course, if $\Omega$ is an interval in $\mathbb{R}$, then we  need
not the infimum sign in the preceding expression for the distance function.

For a regular curve  $\gamma$  we   denote by $t_\gamma(\omega)$ the unit
tangent vector  at  a   point       $\omega\in\gamma$ consistent with the
orientation  of $\gamma$.

We   will    prove   the  following   lemma of somewhat general character.

\begin{lemma}\label{LE}
Let $\Omega\subseteq\mathbb{C}$  be a  domain,  $I\subseteq\mathbb{R}$ an
open interval, and   $u:\Omega\rightarrow I$   a smooth   function, i.e.,
$u\in    C^1(\Omega)$). Let $\rho$   be  a  metric   density  in $\Omega$
and let   $\tilde{\rho}$   be a metric   density  in the interval $I$. If
\begin{equation}\label{INEQ.RO}
\tilde\rho(u(\omega))|\nabla u(\omega)|\le\rho(\omega)
\end{equation}
for every $\omega\in\Omega$, then we have
\begin{equation}\label{INEQ.DRHO}
d_{\tilde \rho}(u(z),u(w))\le d_{ \rho}(z,w)
\end{equation}
for $z,\, w\in\Omega$.

If the equality sign is attained  in \eqref{INEQ.DRHO} for some pair  of
distinct  points $z$ and $w$, then equality holds in \eqref{INEQ.RO} for
some  $\omega\in\Omega$.
\end{lemma}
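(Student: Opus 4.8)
The plan is to reduce the statement to a one--variable estimate along curves. Fix $z,w\in\Omega$ and let $\gamma\colon[a,b]\to\Omega$ be any regular curve joining $z$ to $w$. Since $u\in C^{1}$ and is real valued, $u\circ\gamma$ is a $C^{1}$ curve in $I$ joining $u(z)$ to $u(w)$, and the chain rule gives $(u\circ\gamma)'(s)=\langle\nabla u(\gamma(s)),\gamma'(s)\rangle$, so that $|(u\circ\gamma)'(s)|\le|\nabla u(\gamma(s))|\,|\gamma'(s)|$ by the Cauchy--Schwarz inequality. This is the only place where the Euclidean geometry of $\Omega$ enters.

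First I would establish \eqref{INEQ.DRHO}. Using the previous inequality and then the hypothesis \eqref{INEQ.RO} at the point $\gamma(s)$,
\begin{equation*}
\int_{a}^{b}\tilde\rho(u(\gamma(s)))\,|(u\circ\gamma)'(s)|\,ds
\le\int_{a}^{b}\tilde\rho(u(\gamma(s)))\,|\nabla u(\gamma(s))|\,|\gamma'(s)|\,ds
\le\int_{a}^{b}\rho(\gamma(s))\,|\gamma'(s)|\,ds ,
\end{equation*}
i.e.\ the $\tilde\rho$--length of $u\circ\gamma$ is at most the $\rho$--length of $\gamma$. Next, if $\Phi$ denotes a primitive of $\tilde\rho$ on $I$ (it exists, $\tilde\rho$ being continuous), then $\frac{d}{ds}\Phi(u(\gamma(s)))=\tilde\rho(u(\gamma(s)))\,(u\circ\gamma)'(s)$ together with $\tilde\rho\ge0$ shows that the $\tilde\rho$--length of $u\circ\gamma$ is at least $|\Phi(u(w))-\Phi(u(z))|=d_{\tilde\rho}(u(z),u(w))$. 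Hence $d_{\tilde\rho}(u(z),u(w))\le\int_{\gamma}\rho(\omega)\,|d\omega|$ for every such $\gamma$, and passing to the infimum over $\gamma$ gives \eqref{INEQ.DRHO}.

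For the equality assertion I argue by contraposition: I assume that \eqref{INEQ.RO} is strict at every point of $\Omega$ and show that then \eqref{INEQ.DRHO} is strict whenever $z\ne w$. Fix distinct $z,w$ and set $L=d_{\rho}(z,w)$. Choose $r>0$ so small that the closed disc $\overline{B}(z,r)=\{\omega:|\omega-z|\le r\}$ is contained in $\Omega$ and does not contain $w$; by continuity and compactness the positive function $\rho-\tilde\rho(u)\,|\nabla u|$ is bounded below by some $\delta>0$ on $\overline{B}(z,r)$. Take regular curves $\gamma_{n}$ from $z$ to $w$ with $\int_{\gamma_{n}}\rho(\omega)\,|d\omega|\to L$. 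By the computation above, the deficit
\begin{equation*}
D_{n}:=\int_{\gamma_{n}}\rho(\omega)\,|d\omega|-\bigl(\tilde\rho\text{--length of }u\circ\gamma_{n}\bigr)
\ \ge\ \int_{\gamma_{n}}\bigl(\rho(\omega)-\tilde\rho(u(\omega))\,|\nabla u(\omega)|\bigr)\,|d\omega|
\end{equation*}
is at least $\delta$ times the Euclidean length of the portion of $\gamma_{n}$ lying in $\overline{B}(z,r)$; that portion has Euclidean length at least $r$, since $\gamma_{n}$ must move a Euclidean distance at least $r$ away from $z$ before it can leave $\overline{B}(z,r)$. Thus $D_{n}\ge\delta r$ for all $n$. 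On the other hand, the $\tilde\rho$--length of $u\circ\gamma_{n}$ is at least $d_{\tilde\rho}(u(z),u(w))=L$ (the last equality being the assumed equality in \eqref{INEQ.DRHO}), so $D_{n}\le\int_{\gamma_{n}}\rho(\omega)\,|d\omega|-L\to0$. This contradicts $D_{n}\ge\delta r>0$, and the assertion follows.

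The first half is routine. The delicate point, and the one I expect to be the main obstacle, is the equality case: one needs a lower bound for the deficit $D_{n}$ that is uniform in $n$, even though there need be no $\rho$--geodesic between $z$ and $w$. Localizing at an endpoint --- where every competing curve is forced to have a definite Euclidean length inside a fixed disc on which the strict inequality yields a uniform gap $\delta$ --- is what supplies this bound.
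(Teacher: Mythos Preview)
Your argument is correct, and it follows a genuinely different route from the paper's.

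For the main inequality, the paper works on a curve $\gamma$ and constructs, via an inductive decomposition, a subset $J\subseteq\gamma$ on which $u$ is injective and whose image covers $[u(z),u(w)]$; a change of variables then converts $\int_{u(z)}^{u(w)}\tilde\rho$ into $\int_{J}\tilde\rho(u)\,|\nabla u|$, which is dominated by $\int_{\gamma}\rho$ via \eqref{INEQ.RO}. Your approach bypasses this decomposition entirely: taking a primitive $\Phi$ of $\tilde\rho$ and invoking the fundamental theorem of calculus gives $d_{\tilde\rho}(u(z),u(w))=|\Phi(u(w))-\Phi(u(z))|\le\int_{a}^{b}\tilde\rho(u\circ\gamma)\,|(u\circ\gamma)'|\,ds$ immediately. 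This is shorter and sidesteps the (not entirely trivial) measurability and countable-interval-decomposition issues implicit in the paper's construction. The price is that the primitive trick is specific to a one-dimensional target $I$, whereas the injective-subset idea is in principle adaptable to higher-dimensional ranges; for the lemma as stated, though, your method is cleaner.

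For the equality case, the paper simply asserts that it ``follows immediately''. Your contraposition argument is more careful and, in fact, addresses a genuine subtlety the paper glosses over: strict inequality curve-by-curve need not survive passage to the infimum when no $\rho$-geodesic exists. Localizing near an endpoint and exploiting that every competing curve must spend Euclidean length at least $r$ inside a fixed compact disc where the gap $\rho-\tilde\rho(u)|\nabla u|\ge\delta>0$ is exactly the right way to secure a uniform deficit $D_{n}\ge\delta r$ along any minimizing sequence.
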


\begin{proof}
Let   $z$ and $w$  be distinct fixed points in $\Omega$. Without lost of
generality,   we may assume that $u(z)\le u(w)$.       Let $\gamma$ be a
regular curve    connecting $z$ and $w$.       Orient it from $z$ to $w$.
Denote
\begin{equation*}
I_0  =  \left\{\omega\in \gamma:\ \text{there exist}\ \lambda(\omega)>0\
\text{such that}\ {\nabla u(\omega)} = \lambda (\omega){t_{\gamma}(\omega)}\right\}.
\end{equation*}
This set may be decomposed  as
\begin{equation*}
I_0 = \bigcup_{n=1}^\infty I_n,
\end{equation*}
where $I_n$ are intervals in $\gamma$ (if the union if finite, we assume
that $I_n =  \emptyset $, starting from an integer $n_0$). Let $J_1=I_1$,
\begin{equation*}
J_2 = I_2 \setminus\left\{\omega\in\gamma : u(\omega)\in u(J_1)\right\},
\end{equation*}
and  by     induction for $n>1$,    let
\begin{equation*}
J_{n+1} =\left\{\omega\in I_n : u(\omega) \not\in u \left(\bigcup_{k=1}^n J_k\right)\right\}.
\end{equation*}
Denote
\begin{equation*}
J =\bigcup_{k=1}^\infty J_k.
\end{equation*}
Then    $[u(z),u(w)]\subseteq u(\gamma)$     and $|[u(z),u(w)]|=|u(J)|$.
Moreover, $u$ is  injective in $J$. Therefore,       by using inequality
\eqref{INEQ.RO}  we obtain
\begin{equation*}
\begin{split}
d_{\tilde \rho}(u(z),u(w)) & = \int_ {u(z)}^{u(w)}\, \tilde\rho(\tilde\omega)\, |d\tilde\omega|
= \int_J\, \tilde\rho(u(\omega))\, |\nabla u(\omega)|\, |d\omega|
\\&\le \int_J\, \rho(\omega)\, |d\omega| \le \int_{\gamma}\,  \rho(\omega)\, |d\omega|.
\end{split}
\end{equation*}
Since $\gamma$ is any curve, we have
\begin{equation*}
d_{\tilde \rho}(u(z),u(w))\le d_{\rho}(z,w),
\end{equation*}
what we have to prove.

The second part of this lemma follows immediately.
\end{proof}

As an  application  of the preceding lemma and the Schwarz--Pick lemma we
prove our main result here.

\begin{proof}[\indent \sc Proof of Theorem~\ref{TH}]
Let  $u:\mathbb{H}\rightarrow\mathbb{R}^+$ be a harmonic function. Denote
by $f$ an  analytic mapping in the upper half--plane  such that
\begin{equation*}
{\rm Im}\ f(z)=u(z)
\end{equation*}
for $z\in\mathbb{H}$. Then $f$ maps $\mathbb{H}$ into $\mathbb{H}$. Since
$|f'(z)|=|\nabla u(z)|,\, z\in\mathbb{H}$,  applying       the version of
Schwarz's lemma for analytic    function    in the upper half--plane, i.e.
\eqref{SCHWARZ.PICK.HALFPLANE.DERIVATE},  we obtain
\begin{equation}\label{INEQ}
\frac{|\nabla u(z)|}{u(z)} = \frac{|f'(z)|}{{\rm Im}\ f(z)}\le \frac 1{{\rm Im}\ z}
\end{equation}
for $z\in\mathbb{H}$. The equality sign appears   if and only if $f$   is
a M\"{o}bius transform of  $\mathbb{H}$ onto $\mathbb{H}$.     This is  a
counterpart   of  the sharp estimate  \eqref{NABLA.HARMONIC} for harmonic
functions in the unit disc.

Thus    $u$ satisfies the condition of Lemma \ref{LE} with the hyperbolic
metric density on each sides.     Thus, according to this lemma we obtain
\begin{equation}\label{CONT.DH.2}
d_h(u(z),u(w))\le d_h(z,w)
\end{equation}
for all $z,\, w\in\mathbb{H}$.

If  the      equality sign holds for a pair of distinct points  $z,\, w\in
\mathbb{H}$, then we must have the equality sign in \eqref{INEQ}  for some
$z$.   As we know, this   means          that $u$ must be of the form
\begin{equation*}
u ={\rm Im}(\text{a M\"{o}bius transform of $\mathbb{H}$ onto itself}),
\end{equation*}
what proves the second part of our theorem.
\end{proof}

\begin{remark}
In  order to prove that the inequality \eqref{NABLA.HARMONIC} is sharp the
authors of \cite{KALAJ.VUORINEN.PAMS} have found a function for which  the
inequality \eqref{NABLA.HARMONIC} reduces to the equality.         However,
following  the proof  in   \cite{KALAJ.VUORINEN.PAMS}, one can deduce that
the equality sign holds in  \eqref{NABLA.HARMONIC}      (for some $z$, and
therefore for all $z$) if  and only if $u(z):\mathbb{U}\rightarrow (-1,1)$
has the form
\begin{equation*}
u(z)={\rm Re}\left\{\frac {2i}\pi \log\frac{1+b(z)}{1-b(z)}\right\} = - \frac 2{\pi}{\arg} \frac{1+b(z)}{1-b(z)} ,
\end{equation*}
where $b(z)$    is    a M\"{o}bius transform of the unit disc onto itself.

Using Lemma~\ref{LE}  and the sharp estimate   \eqref{NABLA.HARMONIC} for
harmonic functions $u:\mathbb{U}\rightarrow (-1,1)$        one can derive
\eqref{LIP.DH} (what is the main result of   \cite{KALAJ.VUORINEN.PAMS}),
along with all extremal functions.
\end{remark}

\end{document}